\newtheorem{alphtheorem}{Theorem}
\newtheorem{problem}{Problem}
\newtheorem{lemma}{Lemma}
\newtheorem{claim}{Claim}
\newtheorem{theorem}{Theorem}
\newenvironment{changemargin}[2]{\begin{list}{}{%
\setlength{\topsep}{0pt}%
\setlength{\leftmargin}{0pt}%
\setlength{\rightmargin}{0pt}%
\setlength{\listparindent}{\parindent}%
\setlength{\itemindent}{\parindent}%
\setlength{\parsep}{0pt plus 1pt}%
\addtolength{\leftmargin}{#1}%
\addtolength{\rightmargin}{#2}%
}\item }{\end{list}}
\def\S{\mathcal{S}}
\def\A{\mathcal{A}}
\def\B{\mathcal{B}}
\def\C{\mathcal{C}}
\def\KG{\operatorname{KG}}
\title{ Extremal $G$-free induced subgraphs of Kneser graphs}
\keywords{Erd{\H o}s-Ko-Rado theorem; Erd{\H o}s matching conjecture,  $(s,t)$-union intersecting family. }
\author{Meysam Alishahi}
\address{M. Alishahi, 
Faculty of Mathematical Sciences,
Shahrood University of Technology, Shahrood, Iran}
\email{meysam\_alishahi@shahroodut.ac.ir}
\author{Ali Taherkhani}
\address{A. Taherkhani, 
Department of Mathematics, Institute for Advanced Studies in Basic Sciences (IASBS), Zanjan 45137-66731, Iran}
\email{ali.taherkhani@iasbs.ac.ir}
\begin{document}
\maketitle 

\begin{abstract}
The Kneser graph $\KG_{n,k}$ is a graph whose vertex set is the family of all $k$-subsets of $[n]$ and two vertices are adjacent if their corresponding subsets are disjoint.
The classical Erd{\H o}s-Ko-Rado theorem determines the cardinality and structure of a maximum induced $K_2$-free  subgraph in $\KG_{n,k}$. 
As a generalization of the Erd{\H o}s-Ko-Rado theorem,  Erd{\H o}s proposed a conjecture about the maximum order of an induced $K_{s+1}$-free subgraph of $\KG_{n,k}$.  As the best known result concerning this conjecture,  
Frankl~[{\it Journal of Combinatorial Theory, Series A, {\rm 2013}}], when $n \geq(2s+1)k-s$, gave an affirmative answer to this conjecture and also determined the structure of such a subgraph. In this paper, generalizing the Erd{\H o}s-Ko-Rado theorem and the Erd{\H o}s matching conjecture, we consider the problem of determining the structure of a maximum family $\A$ for which $\KG_{n,k}[\A]$ has no subgraph isomorphic to a given graph $G$. 
In this regard, we determine the size and the structure of such a family provided that $n$ is sufficiently large with respect to  $G$ and $k$. 
Furthermore, for the case $G=K_{1,t}$, we present a Hilton-Milner type theorem regarding above-mentioned problem, which specializes to an improvement of a result by Gerbner {\it et al.}~[{\it  SIAM Journal on Discrete Mathematics, {\rm 2012}}]. \\ \\
\end{abstract}

\section{Introduction and Main Results}

Let $n$ and $k$ be two positive integers such that $n\geq k$. Throughout the paper, the two symbols $[n]$ and ${[n]\choose k}$, respectively, stand for the sets 
$\{1,\ldots,n\}$ and $\{A\subseteq[n]\colon |A|=k\}$.  The {\it Kneser graph $\KG_{n,k}$} is a graph whose vertex set is ${[n]\choose k}$ 
where  two vertices are adjacent if their corresponding sets are disjoint. Kneser graphs are important objects in many areas of combinatorics and they are involved in many 
interesting results, for instance see~\cite{EKR61,HilMil67,Kneser55,Lovasz78,Matousek04,Schrijver78}. A family $\A\subseteq {[n]\choose k}$ is called {\it intersecting} whenever  
 each of two members of $\A$ have nonempty intersection , i.e.,
 $\A$ is an independent set in $\KG_{n,k}$.
Let us recall that for a graph $G$, the set $U\subseteq V(G)$ is independent if $G[U]$, the {\it subgraph induced by $U$}, has no edge. Moreover, the size of a maximum independent set 
in $G$ is called  the {\it independence number of $G$} and is denoted by $\alpha(G)$. Clearly, for each $i\in[n]$, the set 
$$\S_i=\Big\{A\in{[n]\choose k}\colon i\in A \Big\}$$ is an independent set of $\KG_{n,k}$. An independent set $\A$ of $\KG_{n,k}$ is called a {\it star } if  
$\A\subseteq \S_i$ for some $i\in[n]$. Otherwise, $\A$ is called a {\it nontrivial independent set}. 
For an $\ell$-subset $L\subseteq [n]$, the union of $\ell$ stars $\bigcup\limits_{i\in L}\S_{i}$ is said to be an {\it $\ell$-constellation} and is denoted by $C(L)$, i.e.,
$$C(L)=\Big\{A\in {[n]\choose k}\colon A\cap L\neq\varnothing\Big\}.$$ 
The well-known Erd{\H o}s-Ko-Rado theorem~\cite{EKR61} asserts that $\alpha(\KG_{n,k})={n-1\choose k-1}$ provided that $n\geq 2k$; furthermore, 
if $n>2k$, then the only independent sets of this size are maximal stars. 

Evidently, an independent set in a graph $G$ is a subset $U\subseteq V(G)$ for which the induced subgraph $G[U]$ contains no subgraph isomorphic to $K_2$. 
This leads to a number of interesting generalizations of the classical concepts in graph theory, such as independent sets and also proper vertex-coloring of graphs 
(see~\cite{DarFra95, harary1}).  In this regard, we are interested in the following problem which mainly motivated the present paper. 
\begin{problem}\label{problem}
Given a graph $G$, how large a subset $\A\subseteq {[n]\choose k}$ must be chosen to guarantee that $\KG_{n,k}[\A]$ has some subgraph isomorphic to $G$? 
Also, what is the structure of the largest subset $\A\subseteq {[n]\choose k}$ for which   $\KG_{n,k}[\A]$ has no subgraph isomorphic to $G$?
\end{problem} 

This problem has been investigated for some specific  graphs $G$ in the literature, but in a different language.  
In the following, we list some of them.\\
\begin{itemize}
\item The Erd{\H o}s-Ko-Rado theorem answers to the problem when $G=K_2$.\\

\item The case $G=K_{1,t}$ is studied by Gerbner {\it et al.} in~\cite{MR3022158}. For $t=2$, they have proved that if $n\geq 2k+2$, then any maximum subset $\A\subseteq {[n]\choose k}$ for which $\KG_{n,k}[\A]$ has no subgraph isomorphic to $K_{1,2}$ has the cardinality at most ${n-1\choose k-1}$ and the equality holds if and only if  $\A$ is a star. Moreover, for an arbitrary  $t$, they proved the same assertion provided that $n\geq \min\{O(tk^2),O(k^3+tk)\}$. 

\item The case $G=K_{s,t}$ was studied by Katona and Nagy in~\cite{MR3386026}. They proved that 
there is a threshold $n(k,t)$ such that for $n\geq n(k,t)$, if
$\A\subseteq{[n]\choose k}$ and $\KG_{n,k}[\A]$ is a $K_{s,t}$-free graph, then $|\A|\leq {n-1\choose k-1}+s-1$.\\

\item The Erd{\H o}s Matching Conjecture~\cite{Erdos65} suggests the exact value for the size of a maximum subset $\A\subseteq {[n]\choose k}$ for which $\KG_{n,k}[\A]$ has no subgraph isomorphic to $K_{s+1}$. This conjecture has been studied extensively in the literature. It has been already proved that the conjecture is true for $k\leq 3$  (see~\cite{ErdGal59,Frankl2017,LucMie14}). Also, improving the earlier results
 in~\cite{BolDayErd76,Erdos65,FranLucMie12,HuaLohSud12}, Frankl~\cite{Frankl13} confirmed the conjecture for $n\geq (2s+1)k-s$; moreover, he determined the structure of $\A$ in the extremal case.
\end{itemize}
Scott and Wilmer~\cite{ScottWilmer14} studied the number of vertices and the structure of an induced Kneser subgraph whose vertex degrees are located in an interval. 
Another interesting generalization of the Erd{\H o}s-Ko-Rado theorem can be found in~\cite[Theorem~3]{HavWood2014} which concerns the maximum number of vertices for a multipartite subgraph of the complement of Kneser graphs which has bounded size in its parts. 

\begin{alphtheorem}{\rm (Frankl~\cite{Frankl13})}\label{thm:Frankl}
Let $\A\subseteq {[n]\choose k}$ such that $\KG_{n,k}[\A]$ has no-subgraph isomorphic to $K_{s+1}$ and $n\geq(2s + 1)k-s$. Then 
$|\A|\leq {n\choose k}-{n-s\choose k}$
with equality if and only if $\A$ is equal to an $s$-constellation $C(L)$ for some $L\subseteq [n]$. 
\end{alphtheorem}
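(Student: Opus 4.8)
The plan is to first restate the hypothesis combinatorially: $\KG_{n,k}[\A]$ contains no copy of $K_{s+1}$ precisely when $\A$ has no $s+1$ pairwise disjoint members, i.e. the matching number $\nu(\A)$ is at most $s$. Since $\binom{n}{k}-\binom{n-s}{k}=|C(L)|$ for any $s$-set $L$, the statement is the Erd\H{o}s matching conjecture with its equality case in the range $n\ge(2s+1)k-s$. I would attack it by the shifting (compression) method. First I would replace $\A$ by a left-compressed family of the same size using the shift operators $S_{ij}$; a standard fact is that shifting does not increase $\nu$, so the compressed family still has no $(s+1)$-matching. It then suffices to prove the bound for left-compressed $\A$, and for the equality case to show that the only left-compressed extremal family is $C([s])$ and that reversing the shifts produces no other extremal family.

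Second I would argue by induction on the pair $(s,n)$, ordered lexicographically (induction on $s$, with a secondary induction on $n$), the base case $s=1$ being the Erd\H{o}s--Ko--Rado theorem quoted above, which supplies both the value $\binom{n-1}{k-1}=\binom{n}{k}-\binom{n-1}{k}$ and the characterization of equality as a star. For the inductive step, split the family at the coordinate $1$: write $\A(1)=\{A\setminus\{1\}:1\in A\in\A\}\subseteq\binom{[2,n]}{k-1}$ for the link and $\A(\bar 1)=\{A\in\A:1\notin A\}\subseteq\binom{[2,n]}{k}$ for the part avoiding $1$, so that $|\A|=|\A(1)|+|\A(\bar 1)|$. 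Both pieces are left-compressed families on $\{2,\dots,n\}$, and the hypothesis $n\ge(2s+1)k-s$ guarantees that $n-1$ is large enough to invoke the induction hypothesis on $\A(\bar 1)$.

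Third, and this is where the real work lies, I would split according to $\nu(\A(\bar 1))$. If $\nu(\A(\bar 1))\le s-1$, the induction hypothesis (parameter $s-1$) gives $|\A(\bar 1)|\le\binom{n-1}{k}-\binom{n-s}{k}$, while trivially $|\A(1)|\le\binom{n-1}{k-1}$; Pascal's identity yields $|\A|\le\binom{n}{k}-\binom{n-s}{k}$, and tracing equality forces $\A(1)=\binom{[2,n]}{k-1}$ (so $\S_1\subseteq\A$) and $\A(\bar 1)=C([2,s])$, whence $\A=C([s])$. The difficult case is $\nu(\A(\bar 1))=s$. Here the constraint $\nu(\A)\le s$ is strong: every $A\in\A(1)$ must meet the vertex set of \emph{every} maximum ($s$-edge) matching of $\A(\bar 1)$, since otherwise a matching avoiding $A$ together with $A\cup\{1\}$ would be an $(s+1)$-matching of $\A$. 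Using left-compressedness one may push a maximum matching of $\A(\bar 1)$ into the smallest coordinates, and, more importantly, exploit that a near-extremal $\A(\bar 1)$ admits many maximum matchings with different supports; their common transversal is small, which forces $|\A(1)|$ to be small. Combining the resulting bound on $\A(1)$ with the bound on $\A(\bar 1)$ coming from the induction on $n$ should give $|\A|<\binom{n}{k}-\binom{n-s}{k}$ for $n\ge(2s+1)k-s$, so this case is never extremal.

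The main obstacle is exactly this second case: the covering estimate bounding $|\A(1)|$ against the supply of maximum matchings of $\A(\bar 1)$, and verifying that the ensuing comparison of binomial coefficients becomes tight around $n=(2s+1)k-s$ — this is the step that pins down the stated threshold. A secondary technical point is transferring uniqueness back to the original family: I would finish by undoing the compressions one operator at a time, checking that if $\A$ attains the bound then its compressed image $C([s])$ can only arise from a constellation, so that $\A=C(L)$ for some $s$-set $L$.
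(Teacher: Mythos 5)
First, a point of orientation: the paper does not prove this statement at all --- Theorem~\ref{thm:Frankl} is imported from Frankl~\cite{Frankl13} as a black box, so your proposal must be measured against Frankl's published argument rather than anything in this paper. Your outline begins along the same standard lines (note that $K_{s+1}$-freeness of $\KG_{n,k}[\A]$ means $\nu(\A)\le s$, left-compress since shifting does not increase the matching number, induct with the Erd\H{o}s--Ko--Rado theorem as base, split at the element $1$ into the link $\A(1)$ and the deletion $\A(\bar 1)$; and the case $\nu(\A(\bar 1))\le s-1$ does telescope via Pascal exactly as you say). But there is a genuine gap at the point you yourself flag: the case $\nu(\A(\bar 1))=s$. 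The one precise tool you state --- every $A\in\A(1)$ meets the support of every maximum matching of $\A(\bar 1)$ --- is quantitatively too weak: a single maximum matching has support $S$ with $|S|=sk$, giving only $|\A(1)|\le\binom{n-1}{k-1}-\binom{n-1-sk}{k-1}$, and adding the inductive bound $|\A(\bar 1)|\le\binom{n-1}{k}-\binom{n-s-1}{k}$ beats the target $\binom{n}{k}-\binom{n-s}{k}$ only if $\binom{n-1-sk}{k-1}\ge\binom{n-s-1}{k-1}$, which is false for every $s\ge 2$, $k\ge 2$, since $sk>s$. Everything therefore rests on your unsubstantiated claim that a near-extremal $\A(\bar 1)$ admits many maximum matchings whose supports have a small common transversal; you give no mechanism producing such matchings and no quantitative estimate, and this is not a routine verification --- it is the heart of the theorem and precisely where the threshold $(2s+1)k-s$ must emerge. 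Frankl's actual proof does not run through transversals of maximum matchings of a near-extremal subfamily: he stays with shifted families throughout and exploits shiftedness directly (via the canonical matchings that shiftedness supplies, comparing the sets avoiding the first coordinates against the rest), which is how the sharp threshold is extracted.

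Two secondary gaps deserve mention. Your lexicographic induction on $(s,n)$ never specifies the base case in $n$, namely $n=(2s+1)k-s$ for each $s$; this is exactly where the inequality is tight and any covering slack vanishes, so it cannot be waved through. And the uniqueness transfer is harder than ``undoing the compressions one operator at a time'' suggests: a shift $S_{ij}$ can a priori map a non-constellation of extremal size onto $C([s])$, so one must prove at each un-shift that every extremal preimage is again a constellation --- a known but delicate argument (already nontrivial in the $s=1$ Erd\H{o}s--Ko--Rado case for $n>2k$) that your proposal only gestures at.
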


In the present paper, along with some other auxiliary results, we will answer to Problem~\ref{problem} provided that $n$ is sufficiently large. 
Before stating our main results, we need to introduce some preliminaries. For a given graph $G$ with $\chi(G)=q$, the minimum size of a color class 
among all proper $q$-coloring of $G$ is denoted  by $\eta(G)$, i.e.,
$$\eta(G)=\min\left\{\min\limits_{i\in[q]}|U_i|\colon (U_1,\ldots,U_{q})\text{ is a proper $q$-coloring of } G\right\}.$$
 A subgraph of $G$ is called {\it special} if removing its vertices from $G$ reduces the chromatic number by one. 
Note that $G$ is a subgraph of a complete $q$-partite 
graph $K_{t_1\ldots,t_q}$ for some positive integers $t_1,\ldots, t_q$, where $t_1\geq\cdots\geq t_q=\eta(G)$. 
Now, we are in a position to state our first main result. 
\begin{theorem}\label{thm:main}
Let $k\geq 2$ be a fixed positive integer and $G$ be a fixed graph with $|V(G)|=m$, $\chi(G)=q$ and $\eta(G)=\eta$. 
There exists a threshold $N(G,k)$ such that 
for any $n\geq N(G,k)$ and for any  $\A\subseteq {[n]\choose k}$, if $\KG_{n,k}[\A]$ has no subgraph  isomorphic to $G$, then 
$$|\A|\leq {n\choose k}-{n-q+1\choose k}+\eta-1.$$ 
Moreover, the equality holds if and only if there is a $(q-1)$-set $L\subseteq [n]$ such that 
$$|\A\setminus C(L)|= \eta-1$$
 and $\KG_{n,k}[\A\setminus C(L)]$ has no subgraph isomorphic to  a special subgraph of $G$. 
\end{theorem}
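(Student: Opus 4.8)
The plan is to prove the bound and the characterization together, the whole argument resting on one colouring observation and one greedy ``fattening'' step. For the lower bound I would exhibit the extremal families directly: fix a $(q-1)$-set $L$ and any $\mathcal{B}\subseteq\binom{[n]\setminus L}{k}$ with $|\mathcal{B}|=\eta-1$ that contains no special subgraph of $G$ (for instance an intersecting $\mathcal{B}$, which works because a special subgraph embedded into an intersecting family must be an independent set, hence of size $\ge\eta$), and set $\A=C(L)\cup\mathcal{B}$. To see that $\A$ is $G$-free, suppose $G\hookrightarrow\KG_{n,k}[\A]$ via $\phi$ and colour each $v$ with $\min(\phi(v)\cap L)$ whenever $\phi(v)\in C(L)$; two vertices of the same colour share an element of $L$, so are non-adjacent, and this is a proper $(q-1)$-colouring of $G$ minus the set $U_0$ of vertices sent into $\mathcal{B}$. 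Since $\chi(G)=q$ we get $U_0\neq\varnothing$ and $\chi(G-U_0)\le q-1$; deleting the vertices of $U_0$ one at a time produces $U_0''\subseteq U_0$ with $\chi(G-U_0'')=q-1$, so $\phi|_{U_0''}$ embeds a special subgraph of $G$ into $\KG_{n,k}[\mathcal{B}]$, a contradiction. This yields the matching lower bound $\binom{n}{k}-\binom{n-q+1}{k}+\eta-1$.

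For the upper bound I would use two facts. First, if $\A$ has $m$ pairwise disjoint members then \emph{every} graph on $m$ vertices, in particular $G$, embeds into $\KG_{n,k}[\A]$, since all required edges are then present; hence a $G$-free $\A$ has matching number $\nu(\A)\le m-1$, and the vertex set $T$ of a maximum matching covers $\A$ with $|T|\le k(m-1)$ constant. Second (the fattening step), call $x$ \emph{heavy} if $\deg_\A(x)>c\,n^{k-2}$ for a suitable $c=c(m,k)$; then for $n\ge N(G,k)$ one can greedily embed into $\A$ any complete multipartite graph whose $i$-th part is realized by sets through a prescribed heavy element $x_i$, because the sets through $x_i$ meeting a bounded ``used'' set number only $O(n^{k-2})$. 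Let $L$ be the set of heavy elements and $h=|L|$; fattening $q$ heavy elements into the parts of $K_{t_1,\dots,t_q}\supseteq G$ shows $h\le q-1$.

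Writing $\mathcal{E}=\A\setminus C(L)$, every member of $\mathcal{E}$ avoids all heavy elements yet meets the constant-size cover $T$, so it meets $T$ in a light element; therefore $|\mathcal{E}|\le\sum_{x\in T\ \text{light}}\deg_\A(x)\le k(m-1)\,c\,n^{k-2}=O(n^{k-2})$. If $h\le q-2$, then
$$|\A|\le |C(L)|+|\mathcal{E}|\le\binom{n}{k}-\binom{n-h}{k}+O(n^{k-2})<\binom{n}{k}-\binom{n-q+1}{k}+\eta-1$$
for large $n$, since the gap $\binom{n-h}{k}-\binom{n-q+1}{k}$ is of order $n^{k-1}$. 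If $h=q-1$, the fattening step forces $|\mathcal{E}|\le\eta-1$: otherwise choose $\eta$ members of $\mathcal{E}$ for the (independent) smallest colour class $W_q$ of an optimal $q$-colouring of $G$ and embed $G-W_q$ on the $q-1$ heavy elements, producing $G$ in $\A$. Hence $|\A|\le|C(L)|+\eta-1$ equals the claimed bound, and equality can occur only when $h=q-1$.

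At equality with $h=q-1$ one reads off $C(L)\subseteq\A$ and $|\mathcal{E}|=\eta-1$, and the same colouring/fattening dichotomy shows that $\A$ is $G$-free precisely when $\KG_{n,k}[\mathcal{E}]$ contains no special subgraph of $G$: the forward direction reproduces the $U_0''$ argument of the first paragraph, while the reverse embeds a hypothetical special subgraph found in $\mathcal{E}$ and completes it on $L$ by fattening. I expect the main obstacle to be making the exceptional estimate sharp enough: the bound $|\mathcal{E}|=O(n^{k-2})$, which is exactly what separates the $h\le q-2$ case strictly from the extremum and pins the heavy set to size $q-1$, relies on combining $\nu(\A)\le m-1$ with the heavy/light split on the cover $T$, and on calibrating the threshold $c\,n^{k-2}$ so that fattening still applies while light elements contribute only $O(n^{k-2})$. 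The greedy embeddings themselves, and the choice of $N(G,k)$, are then routine disjointness bookkeeping for $n$ large.
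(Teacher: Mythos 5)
Your proposal is correct in outline, but it takes a genuinely different route from the paper. The paper proceeds by induction on $q=\chi(G)$, first replacing $G$ by a complete multipartite graph $K_{t_1,\ldots,t_q}$, and then splitting on whether some star-restriction $\mathcal{A}_i^*=\mathcal{A}\cap\mathcal{S}_i$ is large: if all are small, every non-edge of $\KG_{n,k}[\mathcal{A}]$ lies inside some $\mathcal{S}_i$, so the induced subgraph has $\binom{|\mathcal{A}|}{2}-o(|\mathcal{A}|^2)$ edges and the Erd\H{o}s--Stone--Simonovits theorem produces $G$; if some $\mathcal{A}_n^*$ is large, the paper peels it off, applies induction to find $K_{t_2,\ldots,t_q}$, and completes the top part using the fact that each $k$-set has at most $M=\binom{n-1}{k-1}-\binom{n-k-1}{k-1}$ non-neighbors in a star. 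You avoid both the induction and the Tur\'an-type machinery: your bounded matching number $\nu(\mathcal{A})\le m-1$ gives a constant-size cover $T$, the heavy/light dichotomy at degree threshold $cn^{k-2}$ plus greedy fattening caps the heavy set at $q-1$ elements and the exceptional family at $O(n^{k-2})$ (and at $\eta-1$ when $h=q-1$), and the gap $\binom{n-h}{k}-\binom{n-q+1}{k}=\Theta(n^{k-1})$ kills the case $h\le q-2$. Your approach is more self-contained, yields an explicit polynomial threshold, and produces the extremal set $L$ directly in one pass (it is the heavy set), whereas the paper's induction organizes the equality analysis recursively and obtains $L$ by accumulating the peeled stars. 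Two small remarks: your greedy embeddings need the (routine, and acknowledged) care of also avoiding the other heavy elements when choosing sets through $x_i$; and in the equality characterization both you and the paper implicitly read ``special subgraph'' as the \emph{induced} subgraph on a $\chi$-reducing vertex set --- with the literal non-induced reading the completion-by-fattening step would lack the edges of $G[V(H)]\setminus H$ --- while your explicit one-vertex-at-a-time extraction of $U_0''$ with $\chi(G-U_0'')=q-1$ is actually more careful than the paper's ``one can simply see'' at the corresponding point.
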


Let $s$ and $t$ be two positive integers such that $t\geq s$. 
A subset $\A\subseteq {[n]\choose k}$
is said to be {\it $(s,t)$-union intersecting}  whenever for each $\{A_1,\ldots,A_s\},\{B_1,\ldots,B_t\}\subseteq \A$, we have 
$$\left(\bigcup\limits_{i=1}^s A_i\right)\cap\left(\bigcup\limits_{i=1}^tB_i\right)\neq \varnothing.$$
It is clear that the Erd{\H o}s-Ko-Rado theorem determines the maximum possible size of a $(1,1)$-union intersecting family $\A\subseteq {[n]\choose k}$.
From this point of view, one may naturally ask for the maximum size of an $(s,t)$-union intersecting family $\A\subseteq {[n]\choose k}$.
Note that any family  $\A\subseteq {[n]\choose k}$ with the property that $\KG_{n,k}[\A]$ has no subgraph isomorphic to $K_{s,t}$ is an
$(s,t)$-union intersecting family and vice versa. This observation implies that if we set $G=K_{s,t}$, then Theroem~\ref{thm:main} gives a partial answer to the aforementioned question. 
However, in the next theorem we prove a better result by estimating the threshold $N(G,k)$ appearing in the statement of Theorem~\ref{thm:main}.
 It should be  mentioned that  the maximum size
of an $(s,t)$-union intersecting family $\A\subset {[n]\choose k}$ have been investigated by Katona and Nagy  in~\cite{MR3386026}. They prove that for sufficiently large $n$,
every $(s,t)$-union intersecting family $\A\subset {[n]\choose k}$ has the cardinality at most $ {n-1\choose k-1}+s-1$.  
In the next theorem, improving this result,  we show that all $(s,t)$-union intersecting families with the maximum possible size are formed by the union of a maximal star and a set of  $s-1$ $k$-sets.

\begin{theorem}\label{thm:EKR}
Let $k,t$ and $s$ be fixed positive integers such that $k\geq 2$ and $t\geq s\geq 1$.  
If $n\geq 1+\max \{2(sk(k-1)+t-1), 2^{2s(1+{3\over 2k-2})}(t-1)^{1\over k-1}(k-1) \}$, then  
 any $(s,t)$-union intersecting family  $\A\subseteq{[n]\choose k}$ has the cardinality at most 
${n-1\choose k-1}+s-1$. Moreover, the equality holds if and only if there is an $i\in[n]$ such that $\A$ is a union of the star $\S_i$ and $s-1$ vertices
from $\A\subseteq{[n]\choose k}\setminus \S_i$. 
\end{theorem}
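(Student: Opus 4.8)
The plan is to organize everything around the conjectured extremal family $\A=\S_i\cup E$, where $E$ is an arbitrary set of $s-1$ members of $\binom{[n]}{k}\setminus\S_i$, and to prove the sharp direction by first isolating a single ``completion'' lemma and then doing the real work of producing a dominant star. For the construction itself I would check directly that it achieves $\binom{n-1}{k-1}+s-1$ and is $(s,t)$-union intersecting: if a copy of $K_{s,t}$ sat inside $\KG_{n,k}[\A]$ with sides $P$ (size $s$) and $Q$ (size $t$), then since $\S_i$ is intersecting and hence induces no edge, every vertex of the copy lying in $\S_i$ would have all of its neighbours in the copy inside $E$; a vertex of $P\cap\S_i$ forces $Q\subseteq E$, impossible because $t\geq s>s-1=|E|$, and symmetrically no vertex of $Q$ lies in $\S_i$, so all $s+t$ vertices lie in $E$, contradicting $|E|=s-1$.

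The engine for the upper bound is the following completion lemma, which I would state and prove first. Suppose $x\in[n]$ is such that its star inside $\A$, namely $\A_x=\{A\in\A\colon x\in A\}$, is nearly full, in the quantitative sense that $|\S_x\setminus\A_x|\leq\binom{n-1-sk}{k-1}-t$. Then at most $s-1$ members of $\A$ avoid $x$. Indeed, if $A_1',\dots,A_s'\in\A$ all avoided $x$, set $W=\bigcup_i A_i'$ with $|W|\leq sk$; the members of $\S_x$ disjoint from every $A_i'$ are exactly the $k$-sets that contain $x$ and avoid $W$, of which there are at least $\binom{n-1-sk}{k-1}$, so the density hypothesis leaves at least $t$ of them inside $\A_x$, and these together with $\{A_1',\dots,A_s'\}$ form a $K_{s,t}$ (the $s+t$ vertices are distinct because the $t$ sets contain $x$ and the $A_i'$ do not), contradicting that $\A$ is $(s,t)$-union intersecting. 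Granting such an $x$, we immediately get $|\A|\leq|\A_x|+(s-1)\leq\binom{n-1}{k-1}+s-1$, and equality forces $\A_x=\S_x$ together with exactly $s-1$ outside members, which is precisely the asserted structure.

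All the substance is therefore in producing a \emph{nearly-full} dominant star, and here I would induct on $s$. The base case $s=1$ is the $K_{1,t}$-free situation (every member of $\A$ is disjoint from at most $t-1$ others), which I would treat directly, essentially recovering the Gerbner \emph{et al.} bound, and then close with the $s=1$ instance of the completion lemma. For the inductive step, if $\A$ happens to be $(s-1,t)$-union intersecting the induction hypothesis already gives the strictly smaller bound $\binom{n-1}{k-1}+s-2$; otherwise $\A$ contains a copy of $K_{s-1,t}$ with $t$-side $B_1,\dots,B_t$. Since $\A$ has no $K_{s,t}$, every member of $\A$ disjoint from all the $B_j$ lies among the $s-1$ vertices of the opposite side, so all but at most $s-1$ members of $\A$ meet the bounded set $U=\bigcup_j B_j$ with $|U|\leq tk$; consequently some $x\in U$ is heavy. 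I would then bootstrap this qualitatively heavy element into the nearly-full star required by the completion lemma, using the induction hypothesis on the members of $\A$ avoiding $x$ to control the deficiency $|\S_x\setminus\A_x|$.

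The hard part will be this quantitative upgrade, and it is exactly what forces the explicit threshold on $n$. Driving the completion lemma requires that after deleting a bounded union of $O(sk)$ (or, through the pairwise intersections that appear in the localization, $O(sk(k-1))$) elements, at least $t$ sets of the dominant star still survive, i.e.\ an inequality of the shape $\binom{n-c(s,k)}{k-1}\geq t$; this is what produces the factor $(t-1)^{1/(k-1)}(k-1)$ and the linear term $sk(k-1)+t-1$, while the compounding of the estimate across the $s$ stages of the induction is what produces the factor $2^{2s(1+3/(2k-2))}$. Making the passage from a merely heavy element of $U$ to an \emph{almost complete} star uniform over all near-extremal configurations, with constants good enough to match the stated bound on $n$, is the delicate step I expect to absorb most of the effort.
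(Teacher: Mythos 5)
Your verification of the extremal construction, your completion lemma, and your localization step are all correct, and together they do yield a valid proof of the bound and of the equality structure \emph{for $n$ sufficiently large} --- by a route genuinely different from the paper's. In fact the ``delicate bootstrap'' you reserve for the end is easier than you fear: the hypothesis of your completion lemma, $|\S_x\setminus\A_x|\leq\binom{n-1-sk}{k-1}-t$, is equivalent to $|\A_x|\geq\binom{n-1}{k-1}-\binom{n-1-sk}{k-1}+t\leq sk\binom{n-2}{k-2}+t=O(n^{k-2})$, whereas your pigeonhole over $U=\bigcup_j B_j$ already hands you an $x$ with $|\A_x|\geq\big(|\A|-s+1\big)/(tk)=\Omega(n^{k-1}/(tk))$. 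So no quantitative upgrade through the induction is needed at all: for $n$ large the heavy element directly satisfies the completion lemma, the induction on $s$ closes (the base case $s=1$ follows from the same two steps, since $K_{1,t}$-freeness means maximum degree at most $t-1$, so all but $t-1$ members of $\A$ meet a fixed $k$-set), and equality forces $\A_x=\S_x$ plus exactly $s-1$ outside sets, as required.

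The genuine gap is the explicit threshold, which is part of the statement being proved. You defer precisely this (``the delicate step I expect to absorb most of the effort'') without an argument, and your diagnosis of where the constants come from is not right. Your method's natural threshold, obtained by comparing $\binom{n-1}{k-1}/(tk)$ against $sk\binom{n-2}{k-2}+t$, is of order $stk^3$; note the \emph{multiplicative} factor $t$, which for large $t$ is strictly worse than the stated bound $1+\max\{2(sk(k-1)+t-1),\,2^{2s(1+{3\over 2k-2})}(t-1)^{1\over k-1}(k-1)\}$, whose dependence on $t$ is additive in one term and of order $(t-1)^{1/(k-1)}$ in the other. The paper gets these constants by an entirely different, global mechanism: assuming $\ell(\A)\geq s$, a Hilton--Milner argument (Claim~\ref{alphaKG[A]}) shows $\alpha(\KG_{n,k}[\A])\leq sM+t-1$, so $\ell(\A)$ is a constant fraction of $L=\binom{n-1}{k-1}$; Lemma~\ref{lem:1}, proved via Frankl's skew version of the Bollob\'as set-pair inequality (Theorem~\ref{skew}), then gives $|E(\KG_{n,k}[\A])|\geq\ell(\A)^2/\binom{2k}{k}$, which is played off against the K{\H o}v\'ari--S\'os--Tur\'an bound $|E|\leq(t-1)^{1/s}L^{2-1/s}$ for the $K_{s,t}$-free graph $\KG_{n,k}[\A]$. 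The factor $2^{2s(1+{3\over 2k-2})}$ arises from $\binom{2k}{k}\leq 2^{2k-1}$ when these two estimates are compared at exponent ${1\over 2s}$ (using $L\geq({n-1\over k-1})^{k-1}$), not from compounding anything across $s$ stages of an induction. So, as written, your proposal proves a weaker theorem (the bound and structure for some unspecified $N(s,t,k)$, essentially the Katona--Nagy regime plus structure); matching the paper's threshold by your local route would require a substantially sharper localization than pigeonhole, and no path to it is given.
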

Note that if we set $t=1$, then the previous theorem is an immediate consequence of  the Erd{\H o}s-Ko-Rado theorem.

Extending the Erd{\H o}s-Ko-Rado theorem, Hilton and Milner~\cite{HilMil67} determined, when $n>2k$, the maximum possible size of an independent set in $\KG_{n,k}$ which is contained in no star, i.e., 
the independent set is {\it nontrivial}. 
In detail, Hilton and Milner proved that for $n>2k$, any nontrivial independent set has the cardinality at most ${n-1\choose k-1}-{n-k-1\choose k-1}+1$. 
Many other interesting extensions of the Erd{\H o}s-Ko-Rado theorem and the Hilton-Milner theorem have since been proved,  for instance see~\cite{MR3482268,MR3403515,DeFr1983,MR0480051,MR1415313,Frankl13,FranLucMie12,FRANKL20121388,MR3022158,MR2489272,KATONA1972183,MR2202076,MR2285800,MR771733}.

When $s=1$ and $t\geq 2$, we have the following theorem giving a result stronger than the previous theorem. 
\begin{theorem}\label{HilMilnew}
Let $n$, $k$ and $t$ be positive integers such that $k\geq 2$, and $t\geq 2$. 
Any $(1,t)$-union intersecting family $\A\subseteq {[n]\choose k}$ of size at least $${n-1\choose k-1}-{n-k-1\choose k-1}+(t-1){2k-1\choose k-1}+t$$
is contained in some star $\S_i$. In particular, if  $n\geq {3k\over2}(1+(t-1+{t\over {2k-1\choose k-1}})^{1\over{k-1}})$ and
$\A$ is $(1,t)$-union intersecting, then $\A\leq{n-1\choose k-1}$ and the equality holds 
if only if $\A$ is the same as a star $\S_i$ for some $i\in [n].$
\end{theorem}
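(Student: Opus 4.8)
The plan is to use the equivalent reformulation that $\A$ is $(1,t)$-union intersecting if and only if $\KG_{n,k}[\A]$ contains no $K_{1,t}$, i.e.\ every member of $\A$ is disjoint from at most $t-1$ other members (the induced Kneser graph has maximum degree at most $t-1$). I would prove the first, stronger, assertion in contrapositive form: if $\A$ is $(1,t)$-union intersecting and lies in no star, then
$$|\A|\le \binom{n-1}{k-1}-\binom{n-k-1}{k-1}+(t-1)\binom{2k-1}{k-1}+t-1,$$
which is exactly one less than the stated threshold, so any family reaching the threshold must be contained in a star. The ``in particular'' statement then follows at once: the hypothesis $n\ge \frac{3k}{2}\bigl(1+(t-1+\frac{t}{\binom{2k-1}{k-1}})^{1/(k-1)}\bigr)$ is precisely what is needed to force $(t-1)\binom{2k-1}{k-1}+t\le \binom{n-k-1}{k-1}$, that is, to make the threshold at most $\binom{n-1}{k-1}$; once this holds, any $(1,t)$-union intersecting family of size exceeding $\binom{n-1}{k-1}$ would, by the first part, be confined to a star, which is impossible since a star has size $\binom{n-1}{k-1}$, and the equality case is pinned down the same way.

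First I would record a per-element degree bound. Since $\A$ is in no star, for each $i\in[n]$ there is a member $C\in\A$ with $i\notin C$. Exactly $\binom{n-k-1}{k-1}$ members of $\S_i$ are disjoint from $C$, and at most $t-1$ of them can lie in $\A$, for otherwise $C$ together with them would span a $K_{1,t}$. Hence $\S_i$ omits at least $\binom{n-k-1}{k-1}-(t-1)$ of its sets from $\A$, giving $d(i):=|\A\cap\S_i|\le \binom{n-1}{k-1}-\binom{n-k-1}{k-1}+(t-1)$ for every $i$. I then fix an element $x$ of maximum degree and split $\A=\A_x\cup\A_{\bar x}$, where $\A_x=\A\cap\S_x$ and $\A_{\bar x}$ consists of the members avoiding $x$. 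The bound just proved already controls the star part, $|\A_x|\le \binom{n-1}{k-1}-\binom{n-k-1}{k-1}+(t-1)$, so everything reduces to showing the $n$-independent estimate $|\A_{\bar x}|\le (t-1)\binom{2k-1}{k-1}$.

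To attack $|\A_{\bar x}|$, I would first argue that $x$ is genuinely heavy. Fixing any $A_0\in\A$, every member meets $A_0$ apart from the at most $t-1$ disjoint from it, so $|\A|\le\sum_{a\in A_0}d(a)+(t-1)\le k\,d(x)+(t-1)$; when $|\A|$ is near the threshold this forces $d(x)=\Theta(n^{k-2})$, of the same order as its upper bound. Now each $C\in\A_{\bar x}$ is disjoint from at most $t-1$ members of $\A_x$, hence meets all but at most $t-1$ of them. Counting the members of $\A_x$ that meet $C$ through the pairwise co-degrees gives $\sum_{c\in C}|\A\cap\S_x\cap\S_c|\ge d(x)-(t-1)$, while $\sum_{c\neq x}|\A\cap\S_x\cap\S_c|=(k-1)d(x)$. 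Thus every member of $\A_{\bar x}$ must absorb almost a full $\tfrac1{k-1}$ share of the total $x$-co-degree, which forces all of $\A_{\bar x}$ to concentrate on the few elements carrying large $x$-co-degree. A finer analysis of this heavy core, exploiting that any two $k$-subsets of a $(2k-1)$-element set intersect (the source of the constant $\binom{2k-1}{k-1}$), confines $\A_{\bar x}$ to at most $(t-1)\binom{2k-1}{k-1}$ sets. Adding the two parts yields $|\A|\le \binom{n-1}{k-1}-\binom{n-k-1}{k-1}+(t-1)\binom{2k-1}{k-1}+(t-1)$, as required.

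The main obstacle is precisely this last confinement: converting the co-degree concentration of $\A_{\bar x}$ into the sharp, $n$-independent constant $\binom{2k-1}{k-1}$ for each unit of $t-1$. The delicate point is that $\A_{\bar x}$ need not be intersecting, since disjoint pairs are permitted up to degree $t-1$, so one cannot simply quote a Hilton--Milner or $\binom{2k-1}{k-1}$ intersecting-family bound; instead one must combine the heaviness of $x$, the degree bound internal to $\A_{\bar x}$, and the elementary $(2k-1)$-set intersection fact to exclude any member lying off the heavy core. I expect the extremal configuration guiding this step, and matching the equality case, to be the union of a near-maximal Hilton--Milner star through $x$ with $(t-1)$ ``copies'' of the family of all $k$-subsets of a fixed $(2k-1)$-element set.
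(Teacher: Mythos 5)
Your reduction is sound as far as it goes: the degree bound $|\A\cap\S_i|\le\binom{n-1}{k-1}-\binom{n-k-1}{k-1}+(t-1)$ for a family in no star is correct, the split $\A=\A_x\cup\A_{\bar x}$ at a maximum-degree element is legitimate, and your ``in particular'' derivation matches the paper's. But everything then hinges on the claim $|\A_{\bar x}|\le (t-1)\binom{2k-1}{k-1}$, and this is precisely where you stop proving and start asserting. Your co-degree concentration argument, pushed as far as it actually goes, shows only that every $C\in\A_{\bar x}$ must contain a ``heavy'' element, of which there are $O(k^2)$; that confines $\A_{\bar x}$ to the $\Theta(n^{k-1})$ sets meeting a bounded core, nowhere near an $n$-independent constant. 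The maximum-degree condition alone cannot close this gap: for instance, all $k$-sets containing a heavy element $y\neq x$ and avoiding $x$ form an intersecting (hence internally degree-zero) family, and excluding it requires showing each such set has $\ge t$ neighbors inside $\A_x$ --- i.e., structural information about $\A_x$ beyond the mere size of $d(x)$, since in that scenario $\A$ drifts toward the star $\S_y$ and one must play the two parts against each other. This stability analysis is the entire difficulty, and ``a finer analysis of this heavy core'' does not supply it. A further warning sign: your guessed extremal configuration is not even $(1,t)$-union intersecting --- a $k$-subset $C$ of the fixed $(2k-1)$-set, other than the distinguished set of the Hilton--Milner family, is disjoint from $\Theta(n^{k-2})$ members of a near-maximal Hilton--Milner star through $x$, so such copies cannot be adjoined. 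This suggests your intermediate bound is not the right invariant to isolate.

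The paper's proof avoids all of this with a different mechanism. It runs a greedy peeling process: pick an edge $A_1B_1$ of $\KG_{n,k}[\A]$, delete $N(A_1)$, repeat while edges remain, obtaining pairs $(A_1,B_1),\dots,(A_m,B_m)$; doubling this sequence produces a skew cross-intersecting system, so Frankl's inequality (Theorem~\ref{skew}) gives $2m\le\binom{2k}{k}$, i.e.\ $m\le\binom{2k-1}{k-1}$. The union $\C$ of the neighborhoods of $A_1,\dots,A_m$ satisfies $|\C|\le (t-1)m$ because $\Delta(\KG_{n,k}[\A])\le t-1$, and the leftover $\A\setminus\C$ is intersecting by construction; if $|\A\setminus\C|\ge\binom{n-1}{k-1}-\binom{n-k-1}{k-1}+t$, it lies in a star by Hilton--Milner and then a vertex of the first deleted edge acquires $t$ neighbors, a contradiction. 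Note in particular that the constant $\binom{2k-1}{k-1}$ enters through the set-pair inequality, not (as you conjectured) through the fact that $k$-subsets of a $(2k-1)$-set pairwise intersect. Your outer frame could in principle be repaired, but the missing confinement step is the theorem, not a detail.
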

As mentioned before, the maximum size of a $(1,t)$-union intersecting family $\A\subseteq {[n]\choose k}$ has been already studied 
by Grebner {\it et al.} in~\cite{MR3022158}.
They proved  that there exists  an $N=N(k,t)$ such that if $n\geq N$ then the size of a (1,t)-union intersecting family $\A\subseteq {[n]\choose k}$ is at most ${n-1\choose k-1}$
with equality if and only if $\A$ is equal to some star $\S_i$. They showed that $N(k,t)\leq \min\{O(tk^2),O(k^3+tk)\}$ and posed the problem  of finding the smallest value of 
$N(k,t)$. Also, with an interesting proof, using Katona's cycle method, they showed that for  $t=2$ and $k\geq 3$,
the minimum of $N(k,2)$ is $2k+2$. Theorem~\ref{HilMilnew} implies that $N(k,t)\leq {3k\over2}(1+(t-1+{t\over {2k-1\choose k-1}})^{1\over{k-1}})$
which is an improvement of their result when $t\geq 3$.

For integers $n$ and $k$ with $n> 2k$, we set $M={n-1\choose k-1}-{n-k-1\choose k-1}$. 
Note that $M\leq k{n-2\choose k-2}=O(n^{k-2}).$ 
In the next theorem, we present a Hilton-Milner type theorem improving Theorem~\ref{thm:EKR}. 

\begin{theorem}\label{thm:hiltontype}
Let $k, s$ and $t$ be three integers such that $k\geq 3$ and $t\geq s\geq 1$ and also let $\beta$ be a  positive real number. 
There is an integer $N=N(s,t,\beta)$ such that if $n\geq N$, then for  
any $(s,t)$-union intersecting family $\A\subseteq {[n]\choose k}$ with $|\A|\geq (s+\beta)M$, we have $\ell(\A)\leq  s-1$. 
\end{theorem}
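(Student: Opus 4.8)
The plan is to prove the contrapositive, reading $\ell(\A)$ as the gap between $|\A|$ and its maximum degree. Writing $\Delta=\max_{i\in[n]}|\{A\in\A:i\in A\}|$, one has $\ell(\A)=\min_{i}|\A\setminus\S_i|=|\A|-\Delta$, so $\ell(\A)\le s-1$ is exactly the assertion that some element lies in all but at most $s-1$ members of $\A$. I would therefore assume $\ell(\A)\ge s$, i.e.\ that \emph{every} element of $[n]$ is missed by at least $s$ members of $\A$, and aim to show $|\A|<(s+\beta)M$. Throughout I would use the telescoping identity $M=\binom{n-1}{k-1}-\binom{n-k-1}{k-1}=\sum_{j=0}^{k-1}\binom{n-2-j}{k-2}$, so that $\binom{n-2}{k-2}\le M\le k\binom{n-2}{k-2}$ and, crucially, $M\to\infty$; this is what lets me absorb additive constants and lower-order $o(M)$ terms into $\beta M$ once $n\ge N(s,t,\beta)$ is large.

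The engine of the argument is the following reduction, which repackages the $(s,t)$-union-intersecting hypothesis. For members $D_1,\dots,D_r\in\A$ with $r\le s$, consider the subfamily $\A^{*}=\{A\in\A:A\cap D_i=\varnothing\text{ for all }i\}$ of members disjoint from each $D_i$. Then $\A^{*}$ is $(s-r,t)$-union intersecting: given a violation $A_1,\dots,A_{s-r},B_1,\dots,B_t\in\A^{*}$ one appends $D_1,\dots,D_r$ to the $s$-side, and since every $B_j$ is disjoint from every $D_i$ the enlarged configuration is a forbidden $K_{s,t}$ in $\A$. Taking $r=s$ shows $|\A^{*}|\le t-1$. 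Applying this with $D_1,\dots,D_s$ chosen to avoid a fixed element $x$ — possible precisely because $\ell(\A)\ge s$ — the members through $x$ that avoid $W=\bigcup_i D_i$ number at most $t-1$, while those meeting $W$ number at most $|W|\binom{n-2}{k-2}\le sk\binom{n-2}{k-2}$. Hence every degree obeys $\Delta\le sk\binom{n-2}{k-2}+t-1$, so $\A$ has no heavy element and, for the max-degree $x$, the family $\B=\A\setminus\S_x$ of members avoiding $x$ is itself large.

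I would then run an induction on $s$, with Theorem~\ref{HilMilnew} supplying the base case: a nontrivial $(1,t)$-union-intersecting family (one with $\ell\ge1$) has size at most $M+(t-1)\binom{2k-1}{k-1}+(t-1)<(1+\beta)M$. For the inductive step I would peel the maximum-degree element $x$ and split $\A=\A_x\cup\B$. The reduction lemma yields a dichotomy for $\B$: either $\B$ is $(s-1,t)$-union intersecting, or some member through $x$ is disjoint from the union of an $(s-1,t)$-violation inside $\B$, which completes a forbidden $K_{s,t}$ and forces $\Delta$ to be small. In the first branch I would apply the induction hypothesis to $\B$ (with a suitable $\beta'$) to get $|\B|\le(s-1+\beta')M+O(1)$, leaving the task of showing that the $x$-star contributes only $|\A_x|\le M(1+o(1))$; granting this, $|\A|=|\A_x|+|\B|\le(s+\beta)M$, while the second branch is handled by the degree bound directly.

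The hard part is the sharp constant $s+\beta$. The naive estimates above are lossy: covering the members meeting $W$ by $\sum_{w\in W}d_w$ costs a factor of $|W|\le sk$, and the degree bound only gives $\Delta\lesssim sM$, so a direct count yields merely $|\A|=O(s^{2}kM)$. Reaching $(s+\beta)M$ requires showing that the family is genuinely carried by at most $s$ near-stars, each contributing only $M(1+o(1))$ members because its sets are pinned to meet a single fixed $k$-set rather than all of $W$; equivalently, one must propagate the hypothesis $\ell(\B)\ge s-1$ through the peeling so the inductive bound is available, and verify that the $\le s$ relevant high-degree elements cover all but $o(M)$ members. Controlling this interaction — or, alternatively, establishing the bound through the averaging inequality $\sum_{C\in\A}\binom{a_C}{s}\le(t-1)\binom{|\A|}{s}$, where $a_C$ counts the members disjoint from $C$, combined with a convexity and stability analysis — is where the main work of the proof will lie.
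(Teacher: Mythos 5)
There is a genuine gap here, and you flag it yourself: your outline only reaches a bound of order $O(s^2kM)$, and the passage to the sharp threshold $(s+\beta)M$ is explicitly deferred (``where the main work of the proof will lie''), so the theorem is not actually established. The concrete failure point is the inductive step. The statement being proved is \emph{not} a size bound: a family with $\ell(\B)\le s-2$ may contain an entire star and thus have size $\binom{n-1}{k-1}=\Theta(n^{k-1})$, whereas $M=\Theta(n^{k-2})$. So the induction hypothesis applied to $\B=\A\setminus\S_x$ only yields the dichotomy ``$|\B|<(s-1+\beta')M$ or $\ell(\B)\le s-2$,'' and in the second branch the estimate $|\B|\le (s-1+\beta')M+O(1)$ that your step relies on is simply unavailable; recovering from that branch requires exactly the stability analysis you leave open. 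Your reduction lemma and the resulting degree bound $\Delta\le sk\binom{n-2}{k-2}+t-1$ are correct (and your base case via Theorem~\ref{HilMilnew} is fine), but they do not close the argument. A minor slip worth noting: the paper defines $\ell(\A)=|\A\setminus\A^*|$ where $\A^*$ is a largest \emph{intersecting} subfamily, not $|\A|-\Delta$; since $\ell(\A)\le|\A|-\Delta$, your contrapositive hypothesis is implied by $\ell(\A)\ge s$, so this particular misreading is harmless in your direction of argument.

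The missing idea, and the route the paper takes, is to lower-bound the number of edges of $\KG_{n,k}[\A]$ in terms of $\ell(\A)$ instead of peeling stars. Lemma~\ref{lem:1} (proved via Frankl's skew version of the Bollob\'as set-pair inequality, Theorem~\ref{skew}) gives $|E(\KG_{n,k}[\A])|\ge \ell(\A)^2/\binom{2k}{k}$, and with $|\A|=\lceil(s+\beta)M\rceil$ the proof splits at the threshold $\beta M$. If $\ell(\A)>\beta M-t$, the induced subgraph has $\Omega(|\A|^2)$ edges, contradicting the K\H{o}v\'ari--S\'os--Tur\'an bound for $K_{s,t}$-free graphs once $n$ is large. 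If $s\le\ell(\A)\le\beta M-t$, then $|\A^*|\ge|\A|-\beta M+t\ge sM+t$, so by Hilton--Milner $\A^*$ lies in a star; since a $k$-set outside a star meets at most $M$ of its members, any $s$ sets of $\A\setminus\A^*$ (which exist because $\ell(\A)\ge s$) have at least $|\A^*|-sM\ge t$ common neighbours in $\A^*$, producing a forbidden $K_{s,t}$. This two-case argument needs no induction on $s$, and the edge-counting lemma is precisely what handles the regime your covering counts cannot: $\ell(\A)$ between a constant and $M$. Your closing mention of the averaging inequality $\sum_{C}\binom{a_C}{s}\le(t-1)\binom{|\A|}{s}$ gestures in a related direction but is not developed, so as it stands the proposal is an honest plan with its central quantitative step unproved.
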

If we set $s=t=1$, then the previous theorem implies that any intersecting family $\A\subseteq {[n]\choose k}$ with $|\A|\geq (t+\beta)M$ is contained in a unique star $\S_i$ 
provided that $n$ is large enough. However, the original Hilton-Milner theorem asserts much more; for $n>2k$, any intersecting family $\A\subseteq {[n]\choose k}$ with $|\A|\geq M+2$ 
is contained in a unique star $\S_i$. 

\section{Proof of Main Results}
\subsection{Preliminaries}
For a given graph $G$, the {\it Tur\'an number ${\rm ex}(n,G)$} is the maximum possible number of edges for a spanning subgraph of $K_n$ with no subgraph isomorphic to $G$.  
The well-known Erd{\H o}s-Stone-Simonovits theorem~\cite{MR0205876,MR0018807} asserts that for any graph $G$ with $\chi(G)\geq 2$,
  \begin{equation}
{\rm ex}(G,n)=(1-{1\over \chi(G)-1}){n\choose 2}+o(n^2).
\end{equation}
This equality asymptotically determines ${\rm ex}(G,n)$ provided that $\chi(G)\geq 3$. However, when $\chi(G)=2$, it just indicates that ${\rm ex}(G,n)=o(n^2)$ and gives no more information about 
${\rm ex}(G,n)$. Indeed, for the case that $G$ is bipartite, the order of ${\rm ex}(G,n)$ is open in general.
In this regard, K{\H o}vari, S\'os, and Tur\'an~\cite{Kovari1954} proved that if $1\leq s\leq t$, then  ${\rm ex}(n,K_{s,t})\leq c(s)n^{2-{1\over s}}$.  
In what follows, we review the proof of this result since we want to estimate the constant $c(s)$. 
Assume that $G$ is an $n$-vertex graph having no $K_{s,t}$ subgraph. Clearly, we must have 
$$\sum\limits_{v\in V(G)}{d(v)\choose s}\leq (t-1){n\choose s}.$$
Consequently, since $\displaystyle n{{2|E(G)|\over n}\choose s}=n{{1\over n}\sum\limits_{v\in V(G)} \deg(v)\choose s}\leq \sum\limits_{v\in V(G)}{\deg(v)\choose s}$, we have 
$$n{({2|E(G)|\over n}-(s-1))^s\over s!}\leq n{{2|E(G)|\over n}\choose s}\leq (t-1){n\choose s}\leq (t-1){n^s\over s!},$$
which implies 
\begin{equation}\label{bipartiteturan}|E(G)|\leq ({1\over 2}+{s-1\over{n^{1-{1\over s}}}}){\sqrt[s]{t-1}}n^{2-{1\over s}}.
\end{equation}

\subsection{Proofs}
This section is devoted to the proofs of the results stated in the previous section. 
For a family $\A\subseteq {[n]\choose k}$, we define $\A^*$ to be the largest intersecting subfamily of $\A$; 
if there are more than one such a subfamily, we just choose one of them. Now, we set $\ell(\A)=|\A\setminus\A^*|$. 
Also, we define $\A^*_i$ to be equal to $\A\cap\S_i$. 
\begin{proof}[Proof of Theorem~\ref{thm:main}]  
Set $T(k,q,\eta)=  {n\choose k}-{n-q+1\choose k}+\eta$.
To prove the first part of the theorem, it suffices to show that for any  $\A\subseteq {[n]\choose k}$, 
if $|\A|\geq T(k,q,\eta)$, then $\KG_{n,k}[\A]$ has some subgraph isomorphic to $G$. 
We proceed by induction on $q$.
If $q=1$, then clearly the assertion holds. Now, we assume that $q\geq 2$. 
Without loss of generality, we can assume that $G=K_{t_1,\ldots,t_q}$, for some positive integers $t_1,\ldots,t_q$, where $t_1\geq\cdots\geq t_q=\eta$. 
Also, without loss of generality, assume that 
$|\A|=T(k,q,\eta)$. We distinguish the following different cases.
\begin{itemize}
\item[(I)] $\max\limits_{i\in[n]}|\A^*_i|< (\sum\limits_{i=2}^qt_i)M+t_1$.\\ In this case, $\KG_{n,k}[\A]$ has at least 
$${|\A|\choose 2}-\sum\limits_{i=1}^n{|\A^*_i|\choose 2}= 
{|\A|\choose 2}-o(|\A|^2)
$$
edges. 
Thus, by the Erd{\H o}s-Stone-Simonovits theorem, 
$\KG_{n,k}[\A]$ contains some subgraph isomorphic to $G$ provided that $n$ is large enough. \\
\item[(II)] $\max\limits_{i\in[n]}|\A^*_i|\geq (\sum\limits_{i=2}^qt_i)M+t_1$.\\ Without loss of generality, we can assume $|\A_n^*|\geq (\sum\limits_{i=2}^qt_i)M+t_1$. 
Consider $\A'=\A\setminus \A^*_n$. 
Since $|\A^*_n|\leq |\S_n|\leq {n-1\choose k-1}$, we have $|\A'|\geq T(k,q-1,\eta)$. Therefore, by the induction, 
there is a threshold $N(K_{t_2,\ldots,t_q},k)$ such that if $n\geq N(K_{t_2,\ldots,t_q},k)$, then 
$\KG_{n,k}[\A']$ contains a subgraph isomorphic to $K_{t_2,\ldots,t_q}$. 
Let $V\subseteq \A'$ be the vertex set of this subgraph. Note that each $A\in V$ is adjacent to at least $|\A_n^*|-M$ vertices in $ \A_n^*$. 
Consequently, there are at least $|\A_n^*|-(\sum\limits_{i=2}^qt_i)M\geq t_1$ vertices in $\A^*$ such that each of them is
adjacent to each vertex in $V$, completing the proof of the first part of the theorem.  
\end{itemize}
In what follows, we determine the structure of $\A$ when the equality holds.  This is done in two parts: 
We first prove the existence of the desired $L$ and then deduce the structure of $\A\setminus C(L)$. 
Suppose that $n\geq N(G,k)$, $|\A|=T(k,q,\eta)-1$, and $\KG_{n,k}[\A]$ has no subgraph isomorphic to $G$. 
In what follows, we prove that there is a $(q-1)$-set $L\subseteq [n]$ such that 
$$|\A\setminus C(L)|= \eta-1.$$
First note that we must have $\max\limits_{i\in[n]}|\A^*_i|\geq (\sum\limits_{i=2}^qt_i)M+t_1$. Otherwise, with the same argument as in the proof of Case~(I), 
$\KG_{n,k}[\A]$ contains some subgraph isomorphic to $G$ provided that $n$ is large enough, which is~not possible. 
Therefore, the argument employed in the proof of Case~(II) should fail. It can be verified that if we have $|\A'|\geq T(k,q-1,\eta)$, then the proof still works. 
Hence, we must have  $|\A^*_n|=\max\limits_{i\in[n]}|\A^*_i|={n-1\choose k-1}$. Note that $\A'\subseteq {[n-1]\choose k}$ has exactly 
$T(k,q-1,\eta)-1$ elements. Also, by following the proof of Case~(II), one can see that if $\KG_{n,k}[\A']$ has some subgraph isomorphic to $K_{t_2,\ldots,t_q}$,
then $\KG_{n,k}[\A]$ contains some subgraph isomorphic to $K_{t_1,\ldots,t_q}$. Consequently, 
$\KG_{n,k}[\A']$ has  no subgraph isomorphic to $K_{t_2,\ldots,t_q}$. 
Accordingly, by the induction, there is an $L\subseteq [n]$ of size $q-1$ such that $|\A\setminus C(L)|= \eta-1$. 
Suppose for the sake of contradiction that 
there is  a subset $\{B_1,\ldots,B_r\}\subseteq \A\setminus C(L)$ such that 
$\KG_{n,k}[\{B_1,\ldots,B_r\}]$ is isomorphic to a special subgraph $H$ of $G$.  
Let $(U_1,\ldots,U_{q-1})$ be a proper $(q-1)$-coloring of $G-H$ such that $|U_1|\geq \cdots\geq |U_{q-1}|$. 
For the simplicity of notation, set $L=\{1,\ldots,q-1\}$ and $b=|\bigcup\limits_{i=1}^rB_i|$.
Let $n$ be sufficiently large such that 
$$\displaystyle{\lfloor{n-b\over q-1}\rfloor \choose k-1}\geq \displaystyle{\lfloor{n-rk\over q-1}\rfloor \choose k-1}\geq |U_1|.$$
Now, consider $q-1$ pairwise disjoint subsets  $Q_1,\ldots,Q_{q-1}$ of $[n]\setminus (\bigcup\limits_{i=1}^r B_i)$ such that 
$|Q_i|=\lfloor{n-b\over q-1}\rfloor$ and  $i\in Q_i$, for each $i\in [q-1]$. 
Set $$\C=\{B_1,\ldots,B_r\}\cup\left(\bigcup\limits_{i=1}^{q-1} {Q_i\choose k-1}\right).$$
Now, one can simply check that $\KG_{n,k}[\C]$ contains $G$ as a subgraph, a contradiction. 

Suppose that there is a $(q-1)$-set $L\subseteq [n]$ such that $|\A\setminus C(L)|=  \eta-1$ and $\KG_{n,k}[\A\setminus C(L)]$ 
has no subgraph isomorphic to  a special subgraph of $G$. 
For a contradiction, assume that $G$ is a subgraph of $\KG_{n,k}[\A]$. 
Note that this subgraph must have some elements in  $\A\setminus C(L)$ as its vertices. 
Consider the subgraph of $G$ induced by these vertices. One can simply see that this subgraph is a special subgraph of $G$ which is~not possible. 
\end{proof} 
Frankl~\cite{MR670845} generalizing a classical result due to Bollob{\'a}s~\cite{MR0183653} proved the following theorem. 
\begin{alphtheorem}\label{skew}\cite{MR670845}
Let $\{(A_1,B_1),\ldots,(A_h,B_h)\}$ be a family of pairs of subsets of an arbitrary set with $|A_i|=k$ and $|B_i|=\ell$ for all $1\leq i\leq h$.
If $A_i\cap B_i=\varnothing$ for  $1\leq i\leq h$ and $A_i\cap B_j\neq\varnothing$ for  $1\leq i<j\leq h$, then
$h\leq {k+\ell\choose k}$.
\end{alphtheorem}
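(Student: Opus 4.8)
The plan is to prove this skew set-pair inequality by the exterior-algebra method (in the spirit of Lov\'asz's tensor proof of the ordinary Bollob\'as inequality), the point being that the one-directional intersection hypothesis is exactly what a triangular ordering argument needs. Let $X=\bigcup_{i=1}^h (A_i\cup B_i)$ be the (finite) ground set and let $V=\R^{k+\ell}$. First I would assign to each element $e\in X$ a vector $v_e\in V$ chosen so that every $k+\ell$ of the assigned vectors are linearly independent; concretely, placing the $v_e$ on the moment curve $t\mapsto(1,t,t^2,\dots,t^{k+\ell-1})$ at distinct parameters guarantees this, since the relevant determinants are Vandermonde and hence nonzero.

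To each set I attach a decomposable element of an exterior power: for $A_i$ put $\x_i=\bigwedge_{e\in A_i}v_e\in\bigwedge^k V$ and for $B_j$ put $\y_j=\bigwedge_{e\in B_j}v_e\in\bigwedge^\ell V$. The decisive elementary observation is that $\x_i\wedge\y_j\in\bigwedge^{k+\ell}V$ vanishes precisely when $A_i$ and $B_j$ share an element (a repeated factor kills a wedge), whereas if $A_i\cap B_j=\varnothing$ then $\x_i\wedge\y_j$ is a wedge of $k+\ell$ of the $v_e$'s, hence nonzero by the general-position choice. In particular $\x_i\wedge\y_i\neq 0$ for every $i$, while the hypothesis $A_i\cap B_j\neq\varnothing$ for $i<j$ forces $\x_i\wedge\y_j=0$ for all $i<j$.

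The heart of the argument is to deduce from this that $\x_1,\dots,\x_h$ are linearly independent in $\bigwedge^k V$, which immediately gives $h\leq\dim\bigwedge^k V=\binom{k+\ell}{k}$. Suppose $\sum_i c_i\x_i=0$ and let $i_0$ be the \emph{largest} index with $c_{i_0}\neq 0$. Wedging the relation with $\y_{i_0}$ kills every term with $i>i_0$ (their coefficients vanish) and every term with $i<i_0$ (there $\x_i\wedge\y_{i_0}=0$, since $i<i_0$ and hence $A_i\cap B_{i_0}\neq\varnothing$), leaving $c_{i_0}(\x_{i_0}\wedge\y_{i_0})=0$; as $\x_{i_0}\wedge\y_{i_0}\neq 0$ this forces $c_{i_0}=0$, a contradiction. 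Selecting the largest rather than the smallest index is exactly where the skewness of the hypothesis is used.

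The only real obstacle is the genericity step: I must be sure that vectors can be chosen so that \emph{every} disjoint pair $A_i\cup B_i$ maps to $k+\ell$ independent vectors simultaneously. Since there are only finitely many such constraints and each is the nonvanishing of a single polynomial (a determinant) in the coordinates of the $v_e$, their common non-solution set is a nonempty Zariski-open set, so a generic assignment — or explicitly the moment-curve assignment above — works for all pairs at once. Everything else is the formal bookkeeping of wedge products.
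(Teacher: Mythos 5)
Your proof is correct. Note first that the paper itself offers no proof of this statement to compare against: it is quoted as Theorem~\ref{skew} directly from Frankl's 1982 paper \cite{MR670845} and used as a black box (in Lemma~\ref{lem:1} and in Theorem~\ref{HilMilnew}). Your argument is the standard exterior-algebra proof of the skew Bollob\'as inequality --- essentially Lov\'asz's tensor-product method, whose adaptation to the one-directional (skew) hypothesis is exactly the contribution usually credited to Frankl and, independently, Kalai --- so it is an entirely appropriate proof of the cited result. All the steps check out: the moment-curve assignment does make every $k+\ell$ of the vectors independent (Vandermonde), the wedge $\x_i\wedge\y_j$ vanishes exactly when $A_i\cap B_j\neq\varnothing$ (under that general-position guarantee, the only subtlety being that you need nonvanishing only for the pairs with $A_i\cap B_i=\varnothing$, which the moment curve supplies for free), and the triangular dependence argument with the \emph{largest} nonzero coefficient correctly exploits that the hypothesis kills $\x_i\wedge\y_{i_0}$ only for $i<i_0$, yielding independence of $\x_1,\ldots,\x_h$ in $\bigwedge^k V$ and hence $h\leq\dim\bigwedge^k V=\binom{k+\ell}{k}$. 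The single bit of bookkeeping you leave implicit --- fixing an ordering of the ground set so that each $\x_i$, $\y_j$ is well defined up to sign, which affects nothing since only vanishing versus nonvanishing matters --- is harmless. Note also that a Katona-type permutation or polynomial-space argument would not obviously give the skew version with this sharp bound, so your choice of method is not just valid but essentially forced if one wants a short self-contained proof.
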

We will use this theorem to prove the following lemma which improves a result by Balogh {\it et al.}~\cite{MR3482268} asserting that for any $\A\subseteq{[n]\choose k}$,
the induced subgraph $\KG_{n,k}[\A]$ has at least ${\ell(\A)^2\over 2{2k\choose k}}$ edges. 
Our technique can be considered as a development of those in~\cite[Lemma 3.1]{MR3482268} and~\cite[Theorem 2.1]{ScottWilmer14}. 

\begin{lemma}\label{lem:1}
If $\A\subseteq {[n]\choose k}$, then 
$|E(\KG_{n,k}[\A])|\geq {\ell(\A)^2\over {2k\choose k}}.$
\end{lemma}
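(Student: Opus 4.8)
The plan is to recast $\ell(\A)$ graph-theoretically and then bound the edges by peeling off one vertex at a time. Since $\A^*$ is a maximum independent set of $H:=\KG_{n,k}[\A]$, its complement $\A\setminus\A^*$ is a \emph{minimum vertex cover}, so $\ell(\A)=\tau(H)$, the vertex cover number. Writing $\tau=\tau(H)$ and $e=|E(H)|$, the target becomes $e\ge \tau^2/\binom{2k}{k}$, which I would prove by induction on $\tau$ (equivalently on $|\A|$). The mechanism is to delete a single vertex $v$: one always has $\tau(H-v)\ge\tau-1$ (a minimum cover of $H-v$ together with $v$ covers $H$), and $e=|E(H-v)|+\deg_H(v)$, so the inductive step reduces to choosing $v$ well. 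The base case $\tau=0$ is the edgeless graph, where the bound is vacuous.

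Two cases arise. If some vertex $v$ has $\tau(H-v)=\tau$, delete it; by induction $|E(H-v)|\ge\tau^2/\binom{2k}{k}$ and we are done irrespective of $\deg_H(v)$. Otherwise $H$ is \emph{cover-critical}, meaning $\tau(H-v)=\tau-1$ for every vertex $v$. In this case I must exhibit a vertex of large degree, namely one with
\[
\deg_H(v)\ \ge\ \frac{2\tau-1}{\binom{2k}{k}},
\]
since then $|E(H)|\ge(\tau-1)^2/\binom{2k}{k}+(2\tau-1)/\binom{2k}{k}=\tau^2/\binom{2k}{k}$, using the telescoping identity $\tau^2-(\tau-1)^2=2\tau-1$. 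It is exactly this identity that produces the clean constant $\binom{2k}{k}$ rather than $2\binom{2k}{k}$, recovering the factor-$2$ improvement over~\cite{MR3482268}.

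The heart of the matter, and the step I expect to be the main obstacle, is the degree bound $\binom{2k}{k}\,\Delta(H)\ge 2\tau-1$ in the cover-critical case, which is where Theorem~\ref{skew} enters. The model case $\Delta(H)=1$ already shows the idea: here $H$ is an induced matching $\{P_i,Q_i\}_{i=1}^{\tau}$, so $P_i\cap Q_i=\varnothing$ while every non-matched pair among the $P_i,Q_i$ intersects. Feeding \emph{both} orientations $(P_i,Q_i)$ and $(Q_i,P_i)$ into Theorem~\ref{skew}, with both parts of size $k$ so that its bound is $\binom{2k}{k}$, yields $2\tau\le\binom{2k}{k}$; ordering each pair $(P_i,Q_i)$ before its reverse guarantees that all required off-diagonal intersections $A_a\cap B_b$ $(a<b)$ are nonempty. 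Exploiting both orientations simultaneously is precisely the refinement that saves the factor $2$.

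For general $\Delta$ I would use cover-criticality to pick, for each cover vertex $C$, a witness $\phi(C)\in\A^*$ with $C\cap\phi(C)=\varnothing$, and then assemble the pairs $(C,\phi(C))$ — together with the critical exchange structure and, as above, both orientations — into a single skew system governed by Theorem~\ref{skew}. The delicate point is to organize this selection so that the resulting system has length comparable to $2\tau/\Delta$, which then forces $\Delta\ge(2\tau-1)/\binom{2k}{k}$; a naive maximal-chain construction only controls the image of $\phi$ and loses a power of $\tau$, so the real work is engineering the selection so the constant comes out to exactly $\binom{2k}{k}$. Once this degree bound is in hand, the induction closes immediately and delivers $|E(\KG_{n,k}[\A])|\ge \ell(\A)^2/\binom{2k}{k}$.
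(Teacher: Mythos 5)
Your skeleton is sound as far as it goes: the identification $\ell(\A)=\tau(H)$ (complement of a maximum independent set is a minimum vertex cover), the dichotomy $\tau(H-v)\in\{\tau-1,\tau\}$, and the telescoping induction via $\tau^2-(\tau-1)^2=2\tau-1$ are all correct, and your $\Delta=1$ model case is a valid application of Theorem~\ref{skew} with the doubling trick. But the degree bound $\Delta(H)\geq(2\tau-1)/\binom{2k}{k}$ in the critical case is the entire content of the lemma, and for $\Delta\geq 2$ you do not prove it. Your proposed construction --- a witness $\phi(C)\in\A^*$ for each cover vertex $C$, assembled into ``a single skew system'' of length about $2\tau/\Delta$ --- does not satisfy the hypothesis of Theorem~\ref{skew}: the skew condition requires $C_i\cap\phi(C_j)\neq\varnothing$ for all $i<j$, and no ordering of arbitrary pairs $(C,\phi(C))$ guarantees this, since a cover vertex is typically disjoint from many members of $\A^*$ and distinct cover vertices may share or be adjacent to each other's witnesses. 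You concede as much (``loses a power of $\tau$''), so what you have is an outline whose central lemma is missing.

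The fix is to replace the per-cover-vertex selection by a greedy chain with neighborhood deletion, which is exactly what the paper does. Pick an edge $A_1B_1$ of $H$, delete all of $N(A_1)$, pick an edge $A_2B_2$ in what remains, and continue until no edge is left, producing $(A_1,B_1),\ldots,(A_m,B_m)$. Deleting whole neighborhoods makes the skew condition automatic: $A_i\cap B_j\neq\varnothing$ for $i<j$ because $B_j$ survived the removal of $N(A_i)$, and likewise all the $A_j$'s pairwise intersect; hence appending the reversed pairs $(B_m,A_m),\ldots,(B_1,A_1)$ gives a skew system of length $2m$, so $2m\leq\binom{2k}{k}$ --- your ``both orientations'' factor of $2$, now in full generality. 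Since the vertices surviving the process form an independent set, $\bigcup_j N(A_j)$ contains at least $\tau$ vertices of $\A$, so some $A_j$ has degree at least $\tau/m\geq 2\tau/\binom{2k}{k}$. This yields your degree bound unconditionally, with no need for cover-criticality (your case split then becomes redundant). With that lemma in hand your induction closes, and it essentially reproduces the paper's argument: the paper runs this same chain, removes a vertex $A^i$ of degree at least $\ell(\A^i)/m$, and sums the degrees $(\ell(\A)-i+1)/m$ over the iterations, obtaining $\ell(\A)(\ell(\A)+1)/\binom{2k}{k}$, marginally stronger than your $\ell(\A)^2/\binom{2k}{k}$.
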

\begin{proof}
Note that if $\ell(\A)=0$, then there is nothing to prove. Henceforth, we assume that $\ell(A)>0$. 
Set $\A^1=\A$. Let $i\geq1$. For each $i\geq 1$, if $E(\KG_{n,k}[\A^{i}])\neq\varnothing$, we define
$\C^i,m_i,A^i$ and $\A^{i+1}$ as follows.\\
\begin{itemize}
\item Set  $\B^i_1=\A^{i}$.
\item Let $A^i_1B^i_1\in E(\KG_{n,k}[\B^i_1])$. 
\item While $E(\KG_{n,k}[\B^i_j\setminus N(A^i_j)])\neq\varnothing$, define $\B^i_{j+1}=\B^i_{j}\setminus N(A^i_{j})$ 
and choose $A^i_{j+1},B^i_{j+1}\in \B^i_{j+1}$ such that  $A^i_{j+1}B^i_{j+1}\in E(\KG_{n,k}[\B^i_{j+1}])$. 
\end{itemize}
Define $m_i$ to be the maximum index $j$ for which $E(\KG_{n,k}[\B^i_j])\neq\varnothing$ and
 $\C_i$ to be  the set of vertices in $\A^i$ which are adjacent to $A^i_j$ for some $j\in[m_{i}]$. 

Clearly, in view of how the $(A^i_j,B^i_j)$'s are chosen, we have $E(\KG_{n,k}[\B^i_{m_i}\setminus N(A^i_{m_i})])=\varnothing$.
 Hence, $$\A^i\setminus \C^i=\B^i_{m_i}\setminus N(A^i_{m_i})$$
 is an independent  set of $\KG_{n,k}$. Accordingly, 
$|\C^i|\geq \ell(\A^i)$. Now, it is clear that there is a vertex $A^i\in \{A^i_1,\ldots,A^i_{m_i}\}$ such that 
$$\deg(A^i)\geq {|\C^i|\over m_i}\geq {\ell(\A^i)\over m_i}.$$
If $E(\KG_{n,k}[\A^{i}\setminus \{A^{i}\}])\neq\varnothing$, then define 
$\A^{i+1}=\A^{i}\setminus \{A^{i}\}$.

Let $p$ be the maximum index for which $\A^p$ is defined, i.e. $E(\KG_{n,k}[\A^p\setminus \{A^p\}])=\varnothing$. 
Note that $p\geq \ell(\A)$. 
In view of the definition of $A^i$'s, we have  
$$\begin{array}{lll}
|E(\KG_{n,k}[\A])| & \geq & \sum\limits_{i=1}^{p} {\ell(\A_i)\over m_i}\\
			   & \geq & \sum\limits_{i=1}^{\ell(\A)} {\ell(\A)-i+1\over m}\\
			   & \geq & {\ell(\A)(\ell(\A)+1)\over 2m},
\end{array}$$
where $m=\max_{i\geq1} m_i$. 
To finish the proof, it is enough to show that $m \leq {2k-1\choose k-1}$. 
For  $m_i+1\leq j\leq 2m_i$, set $A^i_j=B^i_{2m_i-j+1}$ and $B^i_j=A^i_{2m_i-j+1}$. One can verify that  the family 
$$\left\{(A^i_1,B^i_1),\ldots,(A^i_{2m_i},B^i_{2m_i})\right\}$$ 
satisfies the condition of Theorem~\ref{skew}. 
Consequently, we must have $2m_i\leq {2k\choose k}$ or equivalently, $m_i\leq {2k-1\choose k-1}$. 
Therefore, for each $i$, we have $m_i\leq {2k-1\choose k-1}$, as requested. 
\end{proof}
 We are now in a position to present the proofs of Theorems~\ref{thm:EKR},~\ref{HilMilnew} and~\ref{thm:hiltontype}. 
 \begin{proof}[Proof of Theorem~{\rm\ref{thm:EKR}}]
The case $t=1$ has been already known to be true by the Erd{\H o}s-Ko-Rado theorem. 
For $t\geq 2$, 
we prove the following statement which implies the theorem, immediately. \\
\begin{changemargin}{1cm}{1cm}
\noindent{\it
Under the assumptions of Theorem~{\rm\ref{thm:EKR}}, if $\A\subseteq {[n]\choose k}$ is an $(s,t)$-union intersecting family where $t\geq 2$ and $|\A|\geq {n-1\choose k-1}+s-1$, then 
$|\A|= {n-1\choose k-1}+s-1$ and 
$\ell(\A)= s-1$.  In other words, $\A$ is the union of a star $\S_i$ and $s-1$ vertices from  ${[n]\choose k}\setminus \S_i$.}
\end{changemargin} \vspace{.5cm}
First note that by the Erd{\H o}s-Ko-Rado theorem, we already have $\ell(\A)\geq s-1$. 
Furthermore, if we prove that  $\ell(\A)= s-1$, then it implies $|\A|= {n-1\choose k-1}+s-1$ as well. 
For a contradiction, suppose that $\ell(\A)\geq s.$ 
Without loss of generality, we can assume that $|\A|={n-1\choose k-1}+s-1$. To see this, if $|\A^*|={n-1\choose k-1}$, then consider $\A'\subseteq \A$ such that $|\A^*\setminus \A'|=1$ and $|\A'|={n-1\choose k-1}+s-1$; otherwise, consider $\A'\subseteq \A$ such that $\A^*\subseteq \A'$ and $|\A'|={n-1\choose k-1}+s-1$.
Note that $\A'$ satisfies the conditions of aforementioned statement and we can thus work with $\A'$ instead of $\A$. 
For simplicity of notation, set $L={n-1\choose k-1}$, $M={n-1\choose k-1}-{n-k-1\choose k-1}$ and $R={2k\choose k}$. 

\begin{claim}\label{alphaKG[A]}
$\alpha(\KG_{n,k}[\A])\leq sM+t-1$. 
\end{claim}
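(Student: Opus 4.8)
The plan is to prove the bound by contradiction. First I would record the translation that makes the statement usable: a maximum independent set of $\KG_{n,k}[\A]$ is precisely a maximum intersecting subfamily of $\A$, so $\alpha(\KG_{n,k}[\A]) = |\A^*|$. Thus it suffices to assume $|\A^*| \geq sM + t$ and derive a contradiction with the hypothesis that $\A$ is $(s,t)$-union intersecting (recall this hypothesis is equivalent to $\KG_{n,k}[\A]$ having no $K_{s,t}$ subgraph), while keeping in force the standing assumption $\ell(\A) \geq s$ of the surrounding argument.

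The next step is to pin down the structure of $\A^*$. Since $t \geq 2$ we have $sM + t > M + 1 = \binom{n-1}{k-1} - \binom{n-k-1}{k-1} + 1$, so the intersecting family $\A^*$ exceeds the Hilton--Milner bound for nontrivial intersecting families; hence, for $n > 2k$, the Hilton--Milner theorem~\cite{HilMil67} forces $\A^*$ to be trivial, i.e. $\A^* \subseteq \S_i$ for some $i$, which after relabeling I take to be $i = 1$. Because $\A^*$ is a largest intersecting subfamily and $\A \cap \S_1$ is itself intersecting and contains $\A^*$, in fact $\A^* = \A^*_1$.

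Now I would feed in $\ell(\A) \geq s$: there are at least $s$ members of $\A$ avoiding the element $1$. I fix $s$ of them, $A_1,\ldots,A_s \in \A \setminus \S_1$, and set $W = \bigcup_{i=1}^s A_i$, so that $|W| \leq sk$ and $1 \notin W$. The goal becomes to find $t$ members $B_1,\ldots,B_t$ of $\A^* \subseteq \S_1$ that are simultaneously disjoint from $W$: each such $B_j$ is then disjoint from every $A_i \subseteq W$, so $\{A_1,\ldots,A_s\}$ and $\{B_1,\ldots,B_t\}$ witness $\left(\bigcup_i A_i\right) \cap \left(\bigcup_j B_j\right) = \varnothing$, i.e. a copy of $K_{s,t}$ in $\KG_{n,k}[\A]$, contradicting the $(s,t)$-union intersecting property. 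Note the $B_j$ (containing $1$) are automatically distinct from the $A_i$ (avoiding $1$), so this is a genuine $K_{s,t}$.

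The main obstacle is the counting that guarantees these $t$ sets exist, and this is where I would spend the care. The number of $k$-sets containing $1$ that meet a $w$-subset of $[n]\setminus\{1\}$ is $f(w) = \binom{n-1}{k-1} - \binom{n-1-w}{k-1}$, with $f(k) = M$. Since $f(w+1) - f(w) = \binom{n-2-w}{k-2}$ is non-increasing, $f$ is concave with $f(0) = 0$, which yields $f(sk) \leq s\,f(k) = sM$; hence at most $sM$ members of $\A^* \subseteq \S_1$ meet $W$ (here $n$ must be large enough that $n - 1 - sk \geq k - 1$ for the estimates to be meaningful). Consequently at least $|\A^*| - sM \geq (sM + t) - sM = t$ members of $\A^*$ are disjoint from $W$, supplying the desired $B_1,\ldots,B_t$ and completing the contradiction. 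Therefore $\alpha(\KG_{n,k}[\A]) \leq sM + t - 1$.
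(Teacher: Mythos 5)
Your proposal is correct and follows essentially the same route as the paper's proof: Hilton--Milner forces the large independent set into a star $\S_i$, and a count showing at most $sM$ members of the star can meet the union of $s$ sets taken from $\A\setminus \S_i$ leaves $t$ common neighbors, producing a forbidden $K_{s,t}$. The only differences are cosmetic: you bound the blocked sets via concavity of $f(w)$ where the paper uses the simpler per-set union bound (each outside $k$-set meets at most $f(k)=M$ members of the star), and you spell out a step the paper glosses over, namely that maximality of $\A^*$ guarantees the $s$ chosen sets avoid the common element $i$.
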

\begin{proof}
For a contradiction, let $\B$ be an independent set of $\KG_{n,k}[\A]$ with $|\B|\geq sM+t$. 
In view of the Hilton-Milner theorem, there is some $i$ for which $\B \subseteq \S_i$. Now, we consider $C_1,\ldots,C_s\in \A\setminus \B$ (this is  possible since $\ell(\A)\geq s$).
Now, it is clear that there are at least $t$ elements $B_1,\ldots,B_t$ in $\B$ such that  each of  $B_i$'s is adjacent to each of 
$C_j$'s contradicting the fact that $\A$ is $(s,t)$-union intersecting. 
\end{proof}
In view of Claim~\ref{alphaKG[A]}, 
$\ell(\A)= |\A|-\alpha(\KG_{n,k}[\A])\geq L-sM-t+1$. 
Note that 
$$\begin{array}{lll}
L-sM-t+1&\geq& L-sk{n-2\choose k-2}-t+1\\
&&\\
&=&\Big(1-{sk(k-1)\over n-1}-{t-1\over L}\Big)L. 
\end{array}$$
Using Lemma~\ref{lem:1} and the fact that $R={2k \choose k}\leq 2^{2k-1}$, we have 
$$|E(\KG_{n,k}[\A])|\geq {\ell(\A)^2\over R}\geq{(1-{sk(k-1)\over n-1}-{t-1\over L})^2L^2\over R}\geq{(1-{sk(k-1)\over n-1}-{t-1\over L})^2L^2\over 2^{2k-1}}.$$ 
On the other hand, in view of the discussion before the statement of Theorem~\ref{thm:EKR}, we know that   $\KG_{n,k}[\A]$ contains no $K_{s,t}$ subgraph. 
Consequently, Inequality~\ref{bipartiteturan} implies that 
	$$|E(\KG_{n,k}[\A])|\leq ({1\over 2}+{s-1\over{L^{1-{1\over s}}}}){\sqrt[s]{t-1}}L^{2-{1\over s}}\leq {\sqrt[s]{t-1}}L^{2-{1\over s}}.$$
Thus, we should have 
	$$({1-{sk(k-1)\over n-1}-{t-1\over L}})^2\leq {2^{2k-1}} {\sqrt[s]{t-1}\over L^{1\over s}},$$
which implies  	
	$${1-{sk(k-1)\over n-1}-{t-1\over L}}\leq {2^{k-{1\over 2}}} {\sqrt[2s]{t-1}\over L^{1\over 2s}}.$$
 Accordingly, if $n-1\geq \max \{2(sk(k-1)+t-1), 2^{2s(1+{3\over 2k-2})}(t-1)^{1\over k-1}(k-1) \}$, then 
the left hand side of inequality is at least $1\over 2$ and moreover by using the inequality $L={n-1\choose k-1}\geq ({n-1\over k-1})^{k-1}$ the right  hand side is less than half, a contradiction. 
\end{proof}

\begin{proof}[Proof of Theorem~{\rm\ref{HilMilnew}}]
For a contradiction, suppose that $\A$ is not contained in any star. Since $|\A|\geq {n-1\choose k-1}-{n-k-1\choose k-1}+2$, there must be some disjoint 
pair in $\A$, i.e., $\KG_{n,k}[\A]$ has at least one edge. Choose $A_1\in \A_1=\A$ and $B_1\in N(A_1)$. 
For each $i\geq 2$, set $\A_i=\A_{i-1}\setminus N(A_{i-1})$ and until $E(\KG_{n,k}(\A_{i}))\neq\varnothing$,
choose $A_i\in \A_{i}$ and $B_i\in N(A_i)$. Let $m$ be the largest index $i$ for which $E(\KG_{n,k}(\A_{i}))\neq\varnothing$.
For  $m+1\leq j\le 2m$, set $A_j=B_{2m_i-j+1}$ and $B_j=A_{2m_i-j+1}$. 
It can be verified that $\{(A_1,B_1),\ldots,(A_{2m,}B_{2m})\}$
satisfies the condition of Theorem~\ref{skew} for $l=k$. 
Therefore, we must have $2m\leq {2k\choose k}$ and consequently $m\leq {2k-1\choose k-1}$. 
Set $$\C=\Big\{C\in\A| {\rm there\,\, is\,\, some\,\,} i\leq m\,\,{\rm such\,\, that}\,\, C\cap A_i=\varnothing\Big\}.$$ In other words, $\C$ is the set of all neighbors of $\{A_1,\ldots,A_m\}$.  
Since $\Delta(\KG_{n,k}[\A])\leq t-1$, we have 
$|\C|\leq (t-1)m.$
Note that  $\A\setminus \C$ in an independent set of $\KG_{n,k}$. 
Indeed, since $m$ is the largest index for which  $E(\KG_{n,k}(\A_{i}))\neq\varnothing$, we must have 
$E(\KG_{n,k}(\A_{m}\setminus N(A_m)))=\varnothing$. But, in view of the definition of $A_i$'s,  it is clear that $\A\setminus \C=\A_{m}\setminus N(A_m)$.  

If $|{ \A\setminus \C}|\geq {n-1\choose k-1}-{n-k-1\choose k-1}+t$, then $B_1$ has at least $t$ neighbors in $\A\setminus \C$, which is not possible.
Accordingly, $|{ \A\setminus \C}|\leq {n-1\choose k-1}-{n-k-1\choose k-1}+t-1$ and consequently, 
$$
\begin{array}{lll}
|\A|	       & =      &  |\C|+|\A\setminus \C|\\
   	       & \leq & (t-1) {2k-1\choose k-1}+{n-1\choose k-1}-{n-k-1\choose k-1}+t-1,
\end{array}$$
which is impossible. 

First, note that $n\geq {3k\over2}(1+(t-1+{t\over {2k-1\choose k-1}})^{1\over{k-1}})$. 
To complete the proof, it suffices to show that 
$${n-1\choose k-1}-{n-k-1\choose k-1}+(t-1){2k-1\choose k-1}+t\leq {n-1\choose k-1},$$
or equivalently, 
$$t-1+{t\over {2k-1\choose k-1}}\leq\prod_{i=1}^{k-1}\frac{n-k-i}{2k-i}.$$
Since $\prod_{i=1}^{k-1}\frac{n-k-i}{2k-i}\geq (\frac{2n-3k}{3k})^{k-1}$,  the proof will be completed if 
$$t-1+{t\over {2k-1\choose k-1}}\leq(\frac{2n-3k}{3k})^{k-1}$$ which clearly holds. 
\end{proof}
\begin{proof}[Proof of Theorem~{\rm\ref{thm:hiltontype}}]
Without loss of generality, we may assume that $|\A|=\lceil (s+\beta)M\rceil$. 
We shall distinguish the following cases. 
\begin{itemize}
\item $\ell(\A)> \beta M-t$. 
	Using Lemma~\ref{lem:1}, we have $$|E(\KG_{n,k}[\A])|\geq {\ell(\A)^2\over R}=O(|\A|^2).$$ 
	Since every  $K_{s,t}$-free graph with $m$ vertices has at most $c(t)m^{2-{1\over t}}+o(m^{2-{1\over t}})$ edges, we deduce that $\KG_{n,k}[\A]$ 
	has some $K_{s,t}$ provided that $n$ is sufficiently large. \\

\item $s\leq \ell(\A)=|\A|-|\A^*|\leq\beta M-t$. It implies that $|\A^*|\geq |\A|-\beta M+t=sM+t$ and
	consequently, $\A^*$ is trivially intersecting (contained in a star). 
	Each $s$ vertices  in $\A-\A^*$ have at least  $|\A^*|-sM\geq t$ common neighbors in $\A^*$, 
	which completes the proof. 
\end{itemize}
\end{proof}
\section*{Acknowledgements}
We would like to thank Alex Scott for pointing out references~\cite{HavWood2014,ScottWilmer14}. 
\def\cprime{$'$} \def\cprime{$'$}

\end{document}